\documentclass{article}%
\usepackage{amsfonts}
\usepackage{graphicx}
\usepackage{amsmath}%
\setcounter{MaxMatrixCols}{30}%
\usepackage{amssymb}
\providecommand{\U}[1]{\protect\rule{.1in}{.1in}}
\newtheorem{theorem}{Theorem}

\newtheorem{corollary}[theorem]{Corollary}

\newtheorem{definition}[theorem]{Definition}

\newtheorem{notation}[theorem]{Notation}

\newtheorem{proposition}[theorem]{Proposition}

\newenvironment{proof}[1][Proof]{\textbf{#1.} }{\ \rule{0.5em}{0.5em}}
\begin{document}

\title{Axiomatic Differential Geometry III-2\\-Its Landscape-\\Chapter 2: Model Theory II}
\author{Hirokazu NISHIMURA\\Institute of Mathematics\\University of Tsukuba\\Tsukuba, Ibaraki, 305-8571, JAPAN}
\maketitle

\begin{abstract}
Given a complete and (locally) cartesian closed category $\mathbf{U}$, it is
shown that the category of functors from the category of Weil algebras to the
category $\mathbf{U}$\ is (locally, resp.) cartesian closed. The corresponding
axiomatization for differential geometry based upon Weil functors is then given.

\end{abstract}

\section{\label{s0}Introduction}

\textit{Cartesian closedness} is one of the desirable properties that every
good category is expected to possess. Indeed, it is surely behind Steenrod's
epoch-making notion of a \textit{convenient category} of topological spaces,
for which the reader is referred to \cite{st}. Unlike many other desirable
properties (e.g., completeness), cartesian closedness is not stable under
slicing, and \textit{slicing} within the realm of category theory corresponds
to \textit{fibered manifolds} within the realm of differential geometry.
Therefore the importance of \textit{locally cartesian closedness} in the arena
of differential geometry could not be exaggerated. A few convenient categories
of smooth spaces have been proposed (cf. \cite{baez} and \cite{stacey} for
their panoramic expositions), but not all of them are locally cartesian
closed. By way of example, the category of Chen spaces (cf. \cite{chen3})\ and
that of Souriau spaces (cf. \cite{sou})\ are locally cartesian closed, while
that of Fr\"{o}licher spaces (cf. \cite{fro1} and \cite{fro2})\ is not.

The principal objective in this paper. as a sequel to \cite{nishi9}, is to
show that, given a category $\mathbf{U}$\ which is complete and (locally,
resp.) cartesian closed, the category $\mathcal{K}_{\mathbf{U}}$\ of functors
on the category of Weil algebras to $\mathbf{U}$\ is not only complete but
also (locally, resp.) cartesian closed, which will be explained in \S \ref{s3}
and \S \ref{s4}. A corresponding axiomatization is given in \S \ref{s5}.

\section{Preliminaries}

\subsection{Category Theory}

Given a category $\mathbf{C}$\ and a morphism
\[
f:A\rightarrow B
\]
in $\mathbf{C}$, we write
\begin{align*}
A  & =\mathrm{dom}\,f\\
B  & =\mathrm{cod}\,f
\end{align*}

\subsection{Weil Algebras}

Let $k$\ be a commutative ring. The category of Weil algebras over $k$\ (also
called Weil $k$-algebras) is denoted by $\mathbf{Weil}_{k}$. It is well known
that the category $\mathbf{Weil}_{k}$\ is left exact. The initial and terminal
object in $\mathbf{Weil}_{k}$\ is $k$ itself. Given two objects $W_{1}$ and
$W_{2}$ in the category $\mathbf{Weil}_{k}$, we denote their tensor algebra by
$W_{1}\otimes_{k}W_{2}$. For a good treatise on Weil algebras, the reader is
referred to \S \ 1.16 of \cite{kock}. Given a left exact category
$\mathcal{K}$\ and a $k$-algebra object $\mathbb{R}$\ in $\mathcal{K}$, there
is a canonical functor $\mathbb{R}\underline{\otimes}_{k}\cdot$\ (denoted by
$\mathbb{R\otimes}\mathbb{\cdot}$\ in \cite{kock}) from the category
$\mathbf{Weil}_{k}$ to the category of $k$-algebra objects and their
homomorphisms in $\mathcal{K}$.

\section{\label{s2}The Main Example}

Let $\mathbf{U}$\ be a complete and cartesian closed category with
$\mathbb{R}$\ being a $k$-algebra object in $\mathbf{U}$. We have in mind a
convenient category of smooth spaces as $\mathbf{U}$.

\begin{notation}
We introduce the following notation:

\begin{enumerate}
\item We denote by $\mathcal{K}_{\mathbf{U}}$\ the category whose objects are
functors from the category $\mathbf{Weil}_{k}$ to the category $\mathbf{U}$
and whose morphisms are their natural transformations.

\item Given an object $W$ in the category $\mathbf{Weil}_{k}$, we denote by
\[
\mathbf{T}_{\mathbf{U}}^{W}:\mathcal{K}_{\mathbf{U}}\rightarrow\mathcal{K}%
_{\mathbf{U}}%
\]
the functor obtained as the composition with the functor
\[
\_\otimes_{k}W:\mathbf{Weil}_{k}\rightarrow\mathbf{Weil}_{k}%
\]
so that for any object $M$\ in the category $\mathcal{K}_{\mathbf{U}}$, we
have
\[
\mathbf{T}_{\mathbf{U}}^{W}\left(  M\right)  =M\left(  \_\otimes_{k}W\right)
\]

\item Given a morphism $\varphi:W_{1}\rightarrow W_{2}$ in the category
$\mathbf{Weil}_{k}$, we denote by
\[
\alpha_{\varphi}^{\mathbf{U}}:\mathbf{T}_{\mathbf{U}}^{W_{1}}\Rightarrow
\mathbf{T}_{\mathbf{U}}^{W_{2}}%
\]
the natural transformation such that, given an object $W$\ in the category
$\mathbf{Weil}_{k}$, the morphism
\[
\alpha_{\varphi}^{\mathbf{U}}\left(  M\right)  :\mathbf{T}_{\mathbf{U}}%
^{W_{1}}\left(  M\right)  \rightarrow\mathbf{T}_{\mathbf{U}}^{W_{2}}\left(
M\right)
\]
is
\[
M\left(  W\otimes_{k}\varphi\right)  :M\left(  W\otimes_{k}W_{1}\right)
\rightarrow M\left(  W\otimes_{k}W_{2}\right)
\]

\item We denote by $\mathbb{R}_{\mathbf{U}}$ the functor
\[
\mathbb{R}\underline{\mathbb{\otimes}}_{k}\_:\mathbf{Weil}_{k}\rightarrow
\mathbf{U}%
\]

\end{enumerate}
\end{notation}

\section{\label{s3}Cartesian Closedness}

\begin{theorem}
\label{t3.1}The category $\mathcal{K}_{\mathbf{U}}$\ is cartesian closed.
\end{theorem}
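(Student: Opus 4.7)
The plan is to equip $\mathcal{K}_{\mathbf{U}}$ with a cartesian closed structure by mimicking the standard construction for functor categories valued in a cartesian closed complete category. The two ingredients are pointwise finite products and exponentials given by an end formula over $\mathbf{Weil}_{k}$.

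First I would observe that products in $\mathcal{K}_{\mathbf{U}}$ are computed pointwise: the terminal object is the constant functor at the terminal object of $\mathbf{U}$, and the binary product $M\times N$ is the functor $W\mapsto M(W)\times_{\mathbf{U}} N(W)$ with the evident componentwise action on morphisms. The required universal properties follow immediately since natural transformations between functors into $\mathbf{U}$ are tested pointwise, so the hypothesis that $\mathbf{U}$ has finite products (from cartesian closedness plus completeness) suffices.

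For the exponential $N^{M}$, I would define
\[
N^{M}(W):=\int_{W'\in\mathbf{Weil}_{k}} N(W')^{M(W')\times\mathbf{Weil}_{k}(W,W')},
\]
where the outer exponent is interpreted as the power by the hom-set $\mathbf{Weil}_{k}(W,W')$ of the $\mathbf{U}$-exponential $N(W')^{M(W')}$, and the end is realized as an equalizer of products in $\mathbf{U}$. Completeness of $\mathbf{U}$ guarantees that the set-indexed powers and the end exist; essential smallness of $\mathbf{Weil}_{k}$ (Weil $k$-algebras being finitely presented up to isomorphism) ensures the relevant diagrams are small. Functoriality of $N^{M}$ in $W$ is inherited from the contravariant functoriality of $\mathbf{Weil}_{k}(-,W')$ and passes through the end.

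Finally, I would establish the defining natural bijection
\[
\mathrm{Hom}_{\mathcal{K}_{\mathbf{U}}}(P\times M,N)\cong\mathrm{Hom}_{\mathcal{K}_{\mathbf{U}}}(P,N^{M})
\]
by transposing under the cartesian closed structure of $\mathbf{U}$, applying the end universal property, and using a Yoneda-style argument: the inverse construction sends a natural transformation $\theta:P\times M\Rightarrow N$ to the family whose $(W,f\colon W\to W')$ component is $\theta_{W'}\circ(P(f)\times\mathrm{id}_{M(W')})$, and conversely one recovers $\theta$ by specializing to $f=\mathrm{id}_{W}$. The main obstacle is the careful bookkeeping of the dinaturality condition attached to the end together with the naturality of the transposes; there is no deep idea beyond the standard functor-category argument, but the chain of canonical bijections is notationally heavy and one must verify that each bijection is natural in $P$, $M$, and $N$ separately.
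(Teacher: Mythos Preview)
Your proposal is correct and is essentially the same argument as the paper's: the paper (following Exercise~1.3.7 of Jacobs) defines $M^{N}(W)$ as the equalizer, inside $\prod_{\varphi:\,\mathrm{dom}\,\varphi=W} M(\mathrm{cod}\,\varphi)^{N(\mathrm{cod}\,\varphi)}$, of the two maps encoding compatibility along every composable pair $\varphi_2\circ\varphi_1$, which is exactly your end $\int_{W'} M(W')^{N(W')\times\mathbf{Weil}_k(W,W')}$ written out as a limit. The paper then asserts the exponential adjunction without further detail, so your sketch of the bijection via transposition in $\mathbf{U}$ and the universal property of the end is, if anything, more explicit than the original.
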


\begin{proof}
The proof is a modification of Exercise 1.3.7\ in \cite{jac}. Let $M$\ and $N
$\ be objects in the category $\mathcal{K}_{\mathbf{U}}$. Given an object $W$
in the category $\mathbf{Weil}_{k}$, we let $M^{N}\left(  W\right)  $\ denote
the intersection of all the equalizers
\begin{align*}
& \prod_{\mathrm{dom}\,\varphi=W}M\left(  \mathrm{cod}\,\varphi\right)
^{N\left(  \mathrm{cod}\,\varphi\right)  }
\begin{array}
[c]{c}%
\rightarrow M\left(  \mathrm{cod}\,\varphi_{1}\right)  ^{N\left(
\mathrm{cod}\,\varphi_{1}\right)  }\,\underrightarrow{M\left(  \varphi
_{2}\right)  ^{N\left(  \mathrm{cod}\,\varphi_{1}\right)  }}\\
\rightarrow M\left(  \mathrm{cod}\,\varphi_{2}\circ\varphi_{1}\right)
^{N\left(  \mathrm{cod}\,\varphi_{2}\circ\varphi_{1}\right)  }%
\,\overrightarrow{M\left(  \mathrm{cod}\,\varphi_{2}\circ\varphi_{1}\right)
^{N\left(  \varphi_{2}\right)  }}%
\end{array}
\\
& M\left(  \mathrm{cod}\,\varphi_{2}\circ\varphi_{1}\right)  ^{N\left(
\mathrm{cod}\,\varphi_{1}\right)  }%
\end{align*}
where $\varphi$\ ranges over all morphisms in the category $\mathbf{Weil}_{k}%
$\ with $\mathrm{dom}\,\varphi=W$, $\varphi_{1}$\ and $\varphi_{2}$ range over
all morphisms in the category $\mathbf{Weil}_{k}$\ with $\mathrm{dom}%
\,\varphi_{1}=W$ and $\mathrm{cod}\,\varphi_{1}=\mathrm{dom}\,\varphi_{2}$,
and the morphisms
\[
\prod_{\mathrm{dom}\,\varphi=W}M\left(  \mathrm{cod}\,\varphi\right)
^{N\left(  \mathrm{cod}\,\varphi\right)  }\rightarrow M\left(  \mathrm{cod}%
\,\varphi_{1}\right)  ^{N\left(  \mathrm{cod}\,\varphi_{1}\right)  }%
\]
and
\[
\prod_{\mathrm{dom}\,\varphi=W}M\left(  \mathrm{cod}\,\varphi\right)
^{N\left(  \mathrm{cod}\,\varphi\right)  }\rightarrow M\left(  \mathrm{cod}%
\,\varphi_{2}\circ\varphi_{1}\right)  ^{N\left(  \mathrm{cod}\,\varphi
_{2}\circ\varphi_{1}\right)  }%
\]
are the canonical projections. Given a morphism $\psi:W_{1}\rightarrow W_{2} $
in the category $\mathbf{Weil}_{k}$, the canonical morphism
\begin{align*}
\prod_{\mathrm{dom}\,\varphi=W_{1}}M\left(  \mathrm{cod}\,\varphi\right)
^{N\left(  \mathrm{cod}\,\varphi\right)  }  & \rightarrow\\
\prod_{\mathrm{dom}\,\varphi^{\prime}=W_{2}}M\left(  \mathrm{cod}%
\,\varphi^{\prime}\circ\psi\right)  ^{N\left(  \mathrm{cod}\,\varphi^{\prime
}\circ\psi\right)  }  & =\prod_{\mathrm{dom}\,\varphi^{\prime}=W_{2}}M\left(
\mathrm{cod}\,\varphi^{\prime}\right)  ^{N\left(  \mathrm{cod}\,\varphi
^{\prime}\right)  }%
\end{align*}
naturally gives rise to a morphism $M^{N}\left(  W_{1}\right)  \rightarrow
M^{N}\left(  W_{2}\right)  $ in the category $\mathcal{K}_{\mathbf{U}}$, which
we let $M^{N}\left(  \psi\right)  $. It is easy to see that $M^{N}$\ becomes
an object in the category $\mathcal{K}_{\mathbf{U}}$, which works as the
exponentiation of $M$\ by $N$ within the category $\mathcal{K}_{\mathbf{U}}$.
\end{proof}

\begin{corollary}
\label{t3.1.1}Given an object $W$ in the category $\mathbf{Weil}_{k}$ and
objects $M$\ and $N$\ in the category $\mathcal{K}_{\mathbf{U}}$, we have
\[
\mathbf{T}_{\mathbf{U}}^{W}\left(  M^{N}\right)  =\mathbf{T}_{\mathbf{U}}%
^{W}\left(  M\right)  ^{\mathbf{T}_{\mathbf{U}}^{W}\left(  N\right)  }%
\]

\end{corollary}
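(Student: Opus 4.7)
The plan is to establish the equality pointwise at each $V \in \mathbf{Weil}_{k}$ by identifying both sides with limits built via the construction of Theorem~\ref{t3.1}, and then exhibiting an explicit isomorphism between them.

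First, I would unpack the two sides. By definition, $\mathbf{T}_{\mathbf{U}}^{W}(M^{N})(V) = M^{N}(V \otimes_{k} W)$, which by Theorem~\ref{t3.1} sits as an intersection of equalizers inside $\prod_{\varphi:V \otimes_{k} W \to V'} M(V')^{N(V')}$ indexed by morphisms out of $V \otimes_{k} W$. On the other hand, $\mathbf{T}_{\mathbf{U}}^{W}(M)^{\mathbf{T}_{\mathbf{U}}^{W}(N)}(V)$ is, by the same construction applied to the shifted functors $\mathbf{T}_{\mathbf{U}}^{W}(M)$ and $\mathbf{T}_{\mathbf{U}}^{W}(N)$, an intersection of equalizers inside $\prod_{\psi: V \to V''} M(V'' \otimes_{k} W)^{N(V'' \otimes_{k} W)}$ indexed by morphisms out of $V$. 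The natural candidate comparison is the canonical projection of the first product onto the components whose indexing morphism has the form $\varphi = \psi \otimes_{k} W$, which manifestly respects the equalizer relations and thus descends to the limits.

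The key step is to show that this projection is an isomorphism in $\mathbf{U}$. I would exploit that tensor product is the coproduct in $\mathbf{Weil}_{k}$, so $V \otimes_{k} W = V \sqcup W$ and every morphism $\varphi: V \otimes_{k} W \to V'$ factors canonically as
\[
\varphi = \langle \mathrm{id}_{V'}, \beta \rangle \circ (\alpha \otimes_{k} W),
\]
where $\alpha = \varphi \circ i_{V}$, $\beta = \varphi \circ i_{W}$, and the first-factor inclusion $i_{V'}: V' \to V' \otimes_{k} W$ is a section of $\langle \mathrm{id}_{V'}, \beta \rangle$. Applying the equalizer relation to the composable pair $(\alpha \otimes_{k} W, \langle \mathrm{id}_{V'}, \beta \rangle)$ and composing with this section, the $\varphi$-component $x_{\varphi}$ of any element on the left side is forced to coincide with $M(\langle \mathrm{id}_{V'}, \beta \rangle) \circ x_{\alpha \otimes_{k} W} \circ N(i_{V'})$. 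Hence any element is completely determined by its sub-family $\{x_{\psi \otimes_{k} W}\}$, which yields the monic direction. Conversely, given any element $\{y_{\alpha}\}$ of the right side, setting $x_{\alpha \otimes_{k} W} := y_{\alpha}$ and extending by the displayed formula to arbitrary $\varphi$ provides the required preimage.

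The main technical obstacle is precisely this converse check: that the extended family $\{x_{\varphi}\}$ satisfies the full equalizer compatibility of $M^{N}(V \otimes_{k} W)$ for arbitrary composable pairs $(\varphi_{1}, \varphi_{2})$. Expanding both sides of the compatibility using the factorization formula reduces the verification to the reduced equalizer condition already enjoyed by $\{y_{\alpha}\}$, together with the identity $\varphi_{2} \circ \langle \mathrm{id}_{V_{1}}, \beta_{1} \rangle = \langle \mathrm{id}_{V_{2}}, \varphi_{2} \circ \beta_{1} \rangle \circ (\varphi_{2} \otimes_{k} W)$, which follows from the universal property of the coproduct in $\mathbf{Weil}_{k}$. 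Naturality of the isomorphism in $V$ is then immediate from the functoriality of all constructions, completing the proof.
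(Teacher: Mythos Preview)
Your proof is correct and is essentially a careful unwinding of what the paper leaves implicit: the corollary is stated without proof, as following directly from the explicit equalizer formula in Theorem~\ref{t3.1}. Your verification via the coproduct property of $\otimes_{k}$ in $\mathbf{Weil}_{k}$, factoring every $\varphi:V\otimes_{k}W\to V'$ as $\langle \mathrm{id}_{V'},\beta\rangle\circ(\alpha\otimes_{k}W)$, is exactly the argument needed to see that the two limits agree, and the compatibility check you describe goes through as stated.
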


\begin{corollary}
\label{t3.1.2}Given a morphism $\varphi:W_{1}\rightarrow W_{2}$ in the
category $\mathbf{Weil}_{k}$ and objects $M$\ and $N$\ in the category
$\mathcal{K}_{\mathbf{U}}$, the morphism
\[
\alpha_{\varphi}^{\mathbf{U}}\left(  M\right)  ^{\mathbf{T}_{\mathbf{U}%
}^{W_{1}}\left(  N\right)  }:\mathbf{T}_{\mathbf{U}}^{W_{1}}\left(
M^{N}\right)  =\mathbf{T}_{\mathbf{U}}^{W_{1}}\left(  M\right)  ^{\mathbf{T}%
_{\mathbf{U}}^{W_{1}}\left(  N\right)  }\rightarrow\mathbf{T}_{\mathbf{U}%
}^{W_{2}}\left(  M\right)  ^{\mathbf{T}_{\mathbf{U}}^{W_{1}}\left(  N\right)
}%
\]
is equal to the morphism
\begin{align*}
\mathbf{T}_{\mathbf{U}}^{W_{1}}\left(  M\right)  ^{\mathbf{T}_{\mathbf{U}%
}^{W_{1}}\left(  N\right)  }  & =\mathbf{T}_{\mathbf{U}}^{W_{1}}\left(
M^{N}\right)  \,\underrightarrow{\alpha_{\varphi}^{\mathbf{U}}\left(
M^{N}\right)  }\,\mathbf{T}_{\mathbf{U}}^{W_{2}}\left(  M^{N}\right)
=\mathbf{T}_{\mathbf{U}}^{W_{2}}\left(  M\right)  ^{\mathbf{T}_{\mathbf{U}%
}^{W_{2}}\left(  N\right)  }\\
& \underrightarrow{\mathbf{T}_{\mathbf{U}}^{W_{2}}\left(  M\right)
^{\alpha_{\varphi}^{\mathbf{U}}\left(  N\right)  }}\,\mathbf{T}_{\mathbf{U}%
}^{W_{2}}\left(  M\right)  ^{\mathbf{T}_{\mathbf{U}}^{W_{1}}\left(  N\right)
}%
\end{align*}

\end{corollary}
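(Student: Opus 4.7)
The plan is to reduce the claimed equality to the naturality of $\alpha_{\varphi}^{\mathbf{U}}$ applied to the canonical evaluation morphism $\mathrm{ev}\colon M^{N}\times N\to M$ in $\mathcal{K}_{\mathbf{U}}$, whose existence is furnished by Theorem~\ref{t3.1}. First, I would transpose both sides of the asserted identity through the exponential adjunction
\[
\mathrm{Hom}(X\times\mathbf{T}_{\mathbf{U}}^{W_{1}}(N),\mathbf{T}_{\mathbf{U}}^{W_{2}}(M))\cong\mathrm{Hom}(X,\mathbf{T}_{\mathbf{U}}^{W_{2}}(M)^{\mathbf{T}_{\mathbf{U}}^{W_{1}}(N)})
\]
with $X=\mathbf{T}_{\mathbf{U}}^{W_{1}}(M^{N})$. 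By the definition of the covariant exponential action, the left-hand side $\alpha_{\varphi}^{\mathbf{U}}(M)^{\mathbf{T}_{\mathbf{U}}^{W_{1}}(N)}$ transposes to $\alpha_{\varphi}^{\mathbf{U}}(M)\circ\mathrm{ev}_{W_{1}}$, while the right-hand composite transposes to $\mathrm{ev}_{W_{2}}\circ(\alpha_{\varphi}^{\mathbf{U}}(M^{N})\times\alpha_{\varphi}^{\mathbf{U}}(N))$ once one invokes the universal property that characterises the contravariant action $\mathbf{T}_{\mathbf{U}}^{W_{2}}(M)^{\alpha_{\varphi}^{\mathbf{U}}(N)}$ in the exponent.

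The next step is to recognise these two transposed morphisms as the two legs of the naturality square of $\alpha_{\varphi}^{\mathbf{U}}\colon\mathbf{T}_{\mathbf{U}}^{W_{1}}\Rightarrow\mathbf{T}_{\mathbf{U}}^{W_{2}}$ applied to $\mathrm{ev}\colon M^{N}\times N\to M$. For this I need two elementary ingredients: first, that $\mathbf{T}_{\mathbf{U}}^{W}$, being precomposition with $\_\otimes_{k}W$, preserves products pointwise, so that $\alpha_{\varphi}^{\mathbf{U}}(M^{N}\times N)$ is identified with $\alpha_{\varphi}^{\mathbf{U}}(M^{N})\times\alpha_{\varphi}^{\mathbf{U}}(N)$; and second, that the canonical isomorphism of Corollary~\ref{t3.1.1} intertwines $\mathbf{T}_{\mathbf{U}}^{W}(\mathrm{ev})$ with the internal evaluation $\mathrm{ev}_{W}$ of the exponential $\mathbf{T}_{\mathbf{U}}^{W}(M)^{\mathbf{T}_{\mathbf{U}}^{W}(N)}$. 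With these identifications in place, the desired equality becomes the naturality relation
\[
\alpha_{\varphi}^{\mathbf{U}}(M)\circ\mathbf{T}_{\mathbf{U}}^{W_{1}}(\mathrm{ev})=\mathbf{T}_{\mathbf{U}}^{W_{2}}(\mathrm{ev})\circ\alpha_{\varphi}^{\mathbf{U}}(M^{N}\times N),
\]
which holds by definition of a natural transformation.

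The main obstacle I anticipate is the verification that the isomorphism of Corollary~\ref{t3.1.1} really is the canonical one induced from the universal property, so that it transports $\mathbf{T}_{\mathbf{U}}^{W}(\mathrm{ev})$ to $\mathrm{ev}_{W}$. This requires unpacking the explicit description of $M^{N}$ as the intersection of equalizers displayed in the proof of Theorem~\ref{t3.1} and checking that applying $\mathbf{T}_{\mathbf{U}}^{W}$ termwise reproduces the analogous intersection for $\mathbf{T}_{\mathbf{U}}^{W}(M)^{\mathbf{T}_{\mathbf{U}}^{W}(N)}$; this is essentially a diagram chase using that $\mathbf{T}_{\mathbf{U}}^{W}$ commutes with products and equalizers. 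Once that compatibility is recorded, the entire corollary follows formally from the transposition and the naturality chase sketched above.
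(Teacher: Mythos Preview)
The paper does not supply an explicit proof for this corollary; it is recorded immediately after Corollary~\ref{t3.1.1} without argument, the intended justification presumably being direct inspection of the explicit equalizer description of $M^{N}(W)$ given in the proof of Theorem~\ref{t3.1}. Your route---transposing both sides through the exponential adjunction and reducing the claim to the naturality square of $\alpha_{\varphi}^{\mathbf{U}}$ at the evaluation morphism $\mathrm{ev}\colon M^{N}\times N\to M$---is correct and more conceptual than a componentwise verification from the equalizer formula. The transposition computations you sketch are accurate: the covariant action $\alpha_{\varphi}^{\mathbf{U}}(M)^{\mathbf{T}_{\mathbf{U}}^{W_{1}}(N)}$ does transpose to $\alpha_{\varphi}^{\mathbf{U}}(M)\circ\mathrm{ev}_{W_{1}}$, and the composite on the right transposes to $\mathrm{ev}_{W_{2}}\circ(\alpha_{\varphi}^{\mathbf{U}}(M^{N})\times\alpha_{\varphi}^{\mathbf{U}}(N))$, so that the equality becomes precisely naturality once $\mathbf{T}_{\mathbf{U}}^{W}(\mathrm{ev})$ is identified with $\mathrm{ev}_{W}$.

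The point you flag as the main obstacle is indeed the only substantive one, and it is handled by the observation that $\mathbf{T}_{\mathbf{U}}^{W}$, being precomposition with $\_\otimes_{k}W$, carries the equalizer diagram defining $M^{N}(W'\otimes_{k}W)$ literally onto the equalizer diagram defining $\mathbf{T}_{\mathbf{U}}^{W}(M)^{\mathbf{T}_{\mathbf{U}}^{W}(N)}(W')$; hence the identification of Corollary~\ref{t3.1.1} is strict and transports evaluation to evaluation. Compared with the paper's implicit approach of reading the statement off the explicit formula, your argument has the advantage of isolating exactly which structural fact is doing the work (naturality against $\mathrm{ev}$), at the cost of needing the compatibility check you mention; the paper's approach would bury that same check inside a larger diagram chase through products of exponentials in $\mathbf{U}$.
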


\section{\label{s4}Locally Cartesian Closedness}

In this section we assume that the category $\mathbf{U}$\ is locally cartesian closed.

\begin{theorem}
\label{t4.1}The category $\mathcal{K}_{\mathbf{U}}$\ is locally cartesian closed.
\end{theorem}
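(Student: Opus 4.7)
The plan is to adapt the construction in the proof of Theorem~\ref{t3.1} to slice categories, carrying it out internally to $\mathbf{U}/M(W)$ for each $W$ rather than in $\mathbf{U}$ itself. Local cartesian closedness of $\mathcal{K}_{\mathbf{U}}$ is equivalent to the assertion that for every $M\in\mathcal{K}_{\mathbf{U}}$ the slice $\mathcal{K}_{\mathbf{U}}/M$ is cartesian closed. Finite limits in $\mathcal{K}_{\mathbf{U}}$, and in particular pullbacks (which give binary products in the slice), are computed pointwise in $\mathbf{U}$ and therefore exist by completeness of $\mathbf{U}$; so only the construction of exponentials in $\mathcal{K}_{\mathbf{U}}/M$ demands work.

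Fix $f:N\Rightarrow M$ and $g:P\Rightarrow M$ in $\mathcal{K}_{\mathbf{U}}/M$. For each $W\in\mathbf{Weil}_{k}$, I would define $(g^{f})(W)\in\mathbf{U}/M(W)$ by literally copying the formula from Theorem~\ref{t3.1}, subject to two reinterpretations: every product $\prod_{\mathrm{dom}\,\varphi=W}$ is taken in the slice $\mathbf{U}/M(W)$, and every exponential $M(\mathrm{cod}\,\varphi)^{N(\mathrm{cod}\,\varphi)}$ is replaced by the pullback along $M(\varphi):M(W)\to M(\mathrm{cod}\,\varphi)$ of the slice exponential of $g(\mathrm{cod}\,\varphi)$ by $f(\mathrm{cod}\,\varphi)$ formed inside $\mathbf{U}/M(\mathrm{cod}\,\varphi)$. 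Such slice exponentials exist because $\mathbf{U}$ is locally cartesian closed, and the pullback exists by completeness. The two parallel morphisms to be equalized are the evident analogues of those in Theorem~\ref{t3.1}, making use of the naturality of pullback to transport everything into $\mathbf{U}/M(W)$.

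The functorial action $(g^{f})(\psi)$ for $\psi:W_{1}\to W_{2}$ is produced exactly as in Theorem~\ref{t3.1}, re-indexing $\varphi\mapsto\varphi\circ\psi$ and using $M(\psi):M(W_{1})\to M(W_{2})$ to move factors between the slices $\mathbf{U}/M(W_{1})$ and $\mathbf{U}/M(W_{2})$. The universal property of $g^{f}$ in $\mathcal{K}_{\mathbf{U}}/M$ then reduces, at each $W$, to the exponential adjunction in $\mathbf{U}/M(W)$, intersected with the naturality constraints enforced by the equalizers; these are precisely the constraints that distinguish a natural family of slice-morphisms from an arbitrary pointwise one.

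The principal obstacle I anticipate is the diagrammatic book-keeping: each slice exponential produced pointwise lives over a different base $M(W')$, so every morphism occurring in the equalizer diagrams must be shown to land in the right slice after the appropriate pullback, and the Beck--Chevalley-type identities relating these pullbacks must be invoked correctly. Once that set-up is in place, however, the verification is a routine reshuffling of the adjunctions available in the slices of $\mathbf{U}$ together with the naturality arguments already used for Theorem~\ref{t3.1}; no new category-theoretic principle beyond local cartesian closedness of $\mathbf{U}$ is required.
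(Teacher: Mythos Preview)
Your proposal is correct and follows essentially the same strategy as the paper: re-run the equalizer-of-products construction from Theorem~\ref{t3.1} inside the appropriate slice categories, using the local cartesian closedness of $\mathbf{U}$ to supply the slice exponentials. The paper expresses this more tersely by writing $(-)_{L}$ for ``the categorical operation within the slice category'' and leaving implicit exactly the pullback bookkeeping you spell out; your version is in fact more explicit about where each factor lives and how it is transported between slices.
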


\begin{proof}
Our present discussion is a localization of the discussion in the proof of
Theorem \ref{t3.1} in a sense. Let $L$\ be an object in the category
$\mathcal{K}_{\mathbf{U}}$. Let $\pi_{1}:M\rightarrow L$\ and $\pi
_{2}:N\rightarrow L$\ be objects in the slice category $\mathcal{K}%
_{\mathbf{U}}/L$. Given an object $W$ in the category $\mathbf{Weil}_{k}$, we
let $\left(  M^{N}\right)  _{L}\left(  W\right)  $\ denote the intersection of
all the equalizers
\begin{align*}
& \left(  \prod_{\mathrm{dom}\,\varphi=W}M\left(  \mathrm{cod}\,\varphi
\right)  ^{N\left(  \mathrm{cod}\,\varphi\right)  }\right)  _{L}\\
&
\begin{array}
[c]{c}%
\rightarrow\left(  M\left(  \mathrm{cod}\,\varphi_{1}\right)  ^{N\left(
\mathrm{cod}\,\varphi_{1}\right)  }\right)  _{L}\,\underrightarrow{\left(
M\left(  \varphi_{2}\right)  ^{N\left(  \mathrm{cod}\,\varphi_{1}\right)
}\right)  _{L}}\\
\rightarrow\left(  M\left(  \mathrm{cod}\,\varphi_{2}\circ\varphi_{1}\right)
^{N\left(  \mathrm{cod}\,\varphi_{2}\circ\varphi_{1}\right)  }\right)
_{L}\,\overrightarrow{\left(  M\left(  \mathrm{cod}\,\varphi_{2}\circ
\varphi_{1}\right)  ^{N\left(  \varphi_{2}\right)  }\right)  _{L}}%
\end{array}
\\
& \left(  M\left(  \mathrm{cod}\,\varphi_{2}\circ\varphi_{1}\right)
^{N\left(  \mathrm{cod}\,\varphi_{1}\right)  }\right)  _{L}%
\end{align*}
where $\varphi$\ ranges over all morphisms in the category $\mathbf{Weil}_{k}%
$\ with $\mathrm{dom}\,\varphi=W$, $\varphi_{1}$\ and $\varphi_{2}$ range over
all morphisms in the category $\mathbf{Weil}_{k}$\ with $\mathrm{dom}%
\,\varphi_{1}=W$ and $\mathrm{cod}\,\varphi_{1}=\mathrm{dom}\,\varphi_{2}$,
and the morphisms
\[
\left(  \prod_{\mathrm{dom}\,\varphi=W}M\left(  \mathrm{cod}\,\varphi\right)
^{N\left(  \mathrm{cod}\,\varphi\right)  }\right)  _{L}\rightarrow\left(
M\left(  \mathrm{cod}\,\varphi_{1}\right)  ^{N\left(  \mathrm{cod}%
\,\varphi_{1}\right)  }\right)  _{L}%
\]
and
\[
\left(  \prod_{\mathrm{dom}\,\varphi=W}M\left(  \mathrm{cod}\,\varphi\right)
^{N\left(  \mathrm{cod}\,\varphi\right)  }\right)  _{L}\rightarrow\left(
M\left(  \mathrm{cod}\,\varphi_{2}\circ\varphi_{1}\right)  ^{N\left(
\mathrm{cod}\,\varphi_{2}\circ\varphi_{1}\right)  }\right)  _{L}%
\]
are the canonical projections, and $\left(  \mathcal{-}\right)  _{L}$ denotes
the categorical operation within the slice category $\mathcal{K}_{\mathbf{U}%
}/L$\ so that $\left(  M\times N\right)  _{L}$ denotes the fibered product
$M\times_{L}N$ by way of example. Given a morphism $\psi:W_{1}\rightarrow
W_{2}$ in the category $\mathbf{Weil}_{k}$, the canonical morphism
\begin{align*}
\left(  \prod_{\mathrm{dom}\,\varphi=W_{1}}M\left(  \mathrm{cod}%
\,\varphi\right)  ^{N\left(  \mathrm{cod}\,\varphi\right)  }\right)  _{L}  &
\rightarrow\\
\left(  \prod_{\mathrm{dom}\,\varphi^{\prime}=W_{2}}M\left(  \mathrm{cod}%
\,\varphi^{\prime}\circ\psi\right)  ^{N\left(  \mathrm{cod}\,\varphi^{\prime
}\circ\psi\right)  }\right)  _{L}  & =\left(  \prod_{\mathrm{dom}%
\,\varphi^{\prime}=W_{2}}M\left(  \mathrm{cod}\,\varphi^{\prime}\right)
^{N\left(  \mathrm{cod}\,\varphi^{\prime}\right)  }\right)  _{L}%
\end{align*}
naturally gives rise to a morphism $\left(  M^{N}\right)  _{L}\left(
W_{1}\right)  \rightarrow\left(  M^{N}\right)  _{L}\left(  W_{2}\right)  $ in
the slice category $\mathcal{K}_{\mathbf{U}}/L$, which we let $\left(
M^{N}\right)  _{L}\left(  \psi\right)  $. It is easy to see that $\left(
M^{N}\right)  _{L}$\ becomes an object in the slice category $\mathcal{K}%
_{\mathbf{U}}/L$, which works as the exponentiation of $M$\ by $N$ within the
slice category $\mathcal{K}_{\mathbf{U}}/L$.
\end{proof}

\begin{corollary}
\label{t4.1.1}Given an object $W$ in the category $\mathbf{Weil}_{k}$, an
object $L$\ in the category $\mathcal{K}_{\mathbf{U}}$, and objects $\pi
_{1}:M\rightarrow L$\ and $\pi_{2}:N\rightarrow L$\ in the slice category
$\mathcal{K}_{\mathbf{U}}/L$, we have
\[
\left(  \mathbf{T}_{\mathbf{U}}\right)  _{L}^{W}\left(  \left(  M^{N}\right)
_{L}\right)  =\left(  \left(  \mathbf{T}_{\mathbf{U}}\right)  _{L}^{W}\left(
M\right)  ^{\left(  \mathbf{T}_{\mathbf{U}}\right)  _{L}^{W}\left(  N\right)
}\right)  _{L}%
\]
where $\left(  \mathbf{T}_{\mathbf{U}}\right)  _{L}^{W}\left(  M\right)  $
denotes the equalizer of
\[
\mathbf{T}_{\mathbf{U}}^{W}\left(  M\right)
\begin{array}
[c]{c}%
\underrightarrow{\qquad\qquad\qquad\qquad\mathbf{T}_{\mathbf{U}}^{W}\left(
\pi_{1}\right)  \qquad\qquad\qquad\qquad}\\
\overrightarrow{\mathbf{T}_{\mathbf{U}}^{W}\left(  \pi_{1}\right)
}\,\mathbf{T}_{\mathbf{U}}^{W}\left(  L\right)  \,\overrightarrow
{\alpha_{W\rightarrow k}^{\mathbf{U}}\left(  L\right)  }\,\mathbf{T}%
_{\mathbf{U}}^{k}\left(  L\right)  \,\overrightarrow{\alpha_{k\rightarrow
W}^{\mathbf{U}}\left(  L\right)  }%
\end{array}
\mathbf{T}_{\mathbf{U}}^{W}\left(  L\right)
\]
with $W\rightarrow k$\ and $k\rightarrow W$\ being the canonical morphisms in
the category $\mathbf{Weil}_{k}$. We can naturally extend $\left(
\mathbf{T}_{\mathbf{U}}\right)  _{L}^{W}$\ to a functor
\[
\mathcal{K}_{\mathbf{U}}/L\rightarrow\mathcal{K}_{\mathbf{U}}/L
\]
in the sense that, given any commutative diagram
\[%
\begin{array}
[c]{ccc}%
M & \underrightarrow{\,\,f\,\,} & N\\
\pi_{1}\searrow &  & \swarrow\pi_{2}\\
& L &
\end{array}
\]
within the category $\mathcal{K}$, there exists a unique morphism
\[
\left(  \mathbf{T}_{\mathbf{U}}\right)  _{L}^{W}\left(  f\right)  :\left(
\mathbf{T}_{\mathbf{U}}\right)  _{L}^{W}\left(  M\right)  \rightarrow\left(
\mathbf{T}_{\mathbf{U}}\right)  _{L}^{W}\left(  N\right)
\]
making the diagram
\[%
\begin{array}
[c]{ccc}%
\left(  \mathbf{T}_{\mathbf{U}}\right)  _{L}^{W}\left(  M\right)  &
\underrightarrow{\left(  \mathbf{T}_{\mathbf{U}}\right)  _{L}^{W}\left(
f\right)  } & \left(  \mathbf{T}_{\mathbf{U}}\right)  _{L}^{W}\left(  N\right)
\\
\downarrow &  & \downarrow\\
\mathbf{T}_{\mathbf{U}}^{W}\left(  M\right)  & \overrightarrow{\mathbf{T}%
_{\mathbf{U}}^{W}\left(  f\right)  } & \mathbf{T}_{\mathbf{U}}^{W}\left(
N\right)
\end{array}
\]
commutative, where the two vertical arrows are the canonical injections.
\end{corollary}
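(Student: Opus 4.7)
The corollary has three intertwined components: the description of $\left(\mathbf{T}_{\mathbf{U}}\right)_L^W(M)$ as an equalizer living over $L$, its functorial behaviour in $M$, and the relative exponential identity. My strategy is to treat each one in turn, viewing the whole corollary as the localization of Corollary~\ref{t3.1.1} inside the slice $\mathcal{K}_{\mathbf{U}}/L$, in precisely the manner that Theorem~\ref{t4.1} localizes Theorem~\ref{t3.1}.

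First I would check that the displayed equalizer is well posed and genuinely lies over $L$. Because $k$ is simultaneously initial and terminal in $\mathbf{Weil}_k$, there are canonical morphisms $W\to k$ and $k\to W$, and the associated natural transformations $\alpha_{W\to k}^{\mathbf{U}}(L)$ and $\alpha_{k\to W}^{\mathbf{U}}(L)$ run between $\mathbf{T}_{\mathbf{U}}^W(L)$ and $\mathbf{T}_{\mathbf{U}}^k(L)$, where the latter is canonically identified with $L$ itself. The equalizer therefore exists in $\mathcal{K}_{\mathbf{U}}$, and composing the equalizer inclusion with $\alpha_{W\to k}^{\mathbf{U}}(L)\circ\mathbf{T}_{\mathbf{U}}^W(\pi_1)$ endows it with a structural morphism to $L$, placing it in $\mathcal{K}_{\mathbf{U}}/L$.

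Second, for the functorial extension, let $f:M\to N$ satisfy $\pi_2\circ f=\pi_1$. Naturality of $\mathbf{T}_{\mathbf{U}}^W$ together with the naturality of the $\alpha$-transformations shows that $\mathbf{T}_{\mathbf{U}}^W(f)$ equalizes the two parallel arrows defining $\left(\mathbf{T}_{\mathbf{U}}\right)_L^W(N)$ when precomposed with the canonical inclusion from $\left(\mathbf{T}_{\mathbf{U}}\right)_L^W(M)$. The universal property of the target equalizer then furnishes a unique $\left(\mathbf{T}_{\mathbf{U}}\right)_L^W(f)$ making the displayed square commute; functoriality in $f$ is immediate from this uniqueness.

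Third, for the exponential identity, I would first invoke Corollary~\ref{t3.1.1} to obtain $\mathbf{T}_{\mathbf{U}}^W(M^N)=\mathbf{T}_{\mathbf{U}}^W(M)^{\mathbf{T}_{\mathbf{U}}^W(N)}$ at the absolute level, then transport the equation into the slice via the various $(-)_L$ operations. The point is that the slicing $(-)_L$, the equalizer defining $\left(\mathbf{T}_{\mathbf{U}}\right)_L^W$, and the relative exponential construction of Theorem~\ref{t4.1} are all shapes of limits inside locally cartesian closed categories, and limits commute with limits. The main obstacle will be the bookkeeping: one must verify that applying $\left(\mathbf{T}_{\mathbf{U}}\right)_L^W$ to the equalizer giving $(M^N)_L$ in the proof of Theorem~\ref{t4.1} yields the same object, up to canonical isomorphism, as iteratively exponentiating $\left(\mathbf{T}_{\mathbf{U}}\right)_L^W(M)$ by $\left(\mathbf{T}_{\mathbf{U}}\right)_L^W(N)$ inside $\mathcal{K}_{\mathbf{U}}/L$. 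This reduces to checking compatibility of $(-)_L$ with products and equalizers in $\mathcal{K}_{\mathbf{U}}$, combined with the universal properties used above; none of it is deep, but the indices must be tracked carefully to ensure the two equalizer constructions commute.
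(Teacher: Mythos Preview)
The paper states this corollary without proof, treating it as an immediate consequence of the explicit formula for $(M^{N})_{L}$ given in the proof of Theorem~\ref{t4.1} together with the fact that $\mathbf{T}_{\mathbf{U}}^{W}$ is precomposition with $\_\otimes_{k}W$. Your three-part plan is a correct and faithful fleshing-out of that implicit argument; the only minor redundancy is the detour through Corollary~\ref{t3.1.1} in your third step, since the bookkeeping you describe afterwards already compares the two explicit slice constructions directly.
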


\begin{corollary}
\label{t4.1.2}Given a morphism $\varphi:W_{1}\rightarrow W_{2}$ in the
category $\mathbf{Weil}_{k}$, an object $L$\ in the category $\mathcal{K}%
_{\mathbf{U}}$, and objects $\pi_{1}:M\rightarrow L$\ and $\pi_{2}%
:N\rightarrow L$\ in the slice category $\mathcal{K}_{\mathbf{U}}/L$, the
morphism
\begin{align*}
\left(  \alpha^{\mathbf{U}}\right)  _{\varphi}^{L}\left(  M\right)  ^{\left(
\mathbf{T}_{\mathbf{U}}\right)  _{L}^{W_{1}}\left(  N\right)  }  & :\left(
\mathbf{T}_{\mathbf{U}}\right)  _{L}^{W_{1}}\left(  \left(  M^{N}\right)
_{L}\right)  =\left(  \left(  \mathbf{T}_{\mathbf{U}}\right)  _{L}^{W_{1}%
}\left(  M\right)  ^{\left(  \mathbf{T}_{\mathbf{U}}\right)  _{L}^{W_{1}%
}\left(  N\right)  }\right)  _{L}\\
& \rightarrow\left(  \left(  \mathbf{T}_{\mathbf{U}}\right)  _{L}^{W_{2}%
}\left(  M\right)  ^{\left(  \mathbf{T}_{\mathbf{U}}\right)  _{L}^{W_{1}%
}\left(  N\right)  }\right)  _{L}%
\end{align*}
is equal to the morphism
\begin{align*}
\left(  \left(  \mathbf{T}_{\mathbf{U}}\right)  _{L}^{W_{1}}\left(  M\right)
^{\left(  \mathbf{T}_{\mathbf{U}}\right)  _{L}^{W_{1}}\left(  N\right)
}\right)  _{L}  & =\left(  \mathbf{T}_{\mathbf{U}}\right)  _{L}^{W_{1}}\left(
\left(  M^{N}\right)  _{L}\right)  \,\underrightarrow{\left(  \alpha
^{\mathbf{U}}\right)  _{\varphi}^{L}\left(  \left(  M^{N}\right)  _{L}\right)
}\,\\
\left(  \mathbf{T}_{\mathbf{U}}\right)  _{L}^{W_{2}}\left(  \left(
M^{N}\right)  _{L}\right)   & =\left(  \left(  \mathbf{T}_{\mathbf{U}}\right)
_{L}^{W_{2}}\left(  M\right)  ^{\left(  \mathbf{T}_{\mathbf{U}}\right)
_{L}^{W_{2}}\left(  N\right)  }\right)  _{L}\,\underrightarrow{\left(
\mathbf{T}_{\mathbf{U}}\right)  _{L}^{W_{2}}\left(  M\right)  ^{\left(
\alpha^{\mathbf{U}}\right)  _{\varphi}^{L}\left(  N\right)  }}\,\\
& \left(  \left(  \mathbf{T}_{\mathbf{U}}\right)  _{L}^{W_{2}}\left(
M\right)  ^{\left(  \mathbf{T}_{\mathbf{U}}\right)  _{L}^{W_{1}}\left(
N\right)  }\right)  _{L}%
\end{align*}
where the natural transformation
\[
\left(  \alpha^{\mathbf{U}}\right)  _{\varphi}^{L}:\left(  \mathbf{T}%
_{\mathbf{U}}\right)  _{L}^{W_{1}}\Rightarrow\left(  \mathbf{T}_{\mathbf{U}%
}\right)  _{L}^{W_{2}}%
\]
is induced by the natural transformation
\[
\alpha_{\varphi}^{\mathbf{U}}:\mathbf{T}_{\mathbf{U}}^{W_{1}}\Rightarrow
\mathbf{T}_{\mathbf{U}}^{W_{2}}%
\]
in the sense of making the diagram
\[%
\begin{array}
[c]{ccc}%
\left(  \mathbf{T}_{\mathbf{U}}\right)  _{L}^{W_{1}}\left(  M\right)  &
\underrightarrow{\left(  \alpha^{\mathbf{U}}\right)  _{\varphi}^{L}\left(
\pi_{1}\right)  } & \mathbf{T}_{\mathbf{U}}^{W_{2}}\left(  M\right) \\
\downarrow &  & \downarrow\\
\mathbf{T}_{\mathbf{U}}^{W_{1}}\left(  M\right)  & \overrightarrow
{\alpha_{\varphi}^{\mathbf{U}}\left(  M\right)  } & \mathbf{T}_{\mathbf{U}%
}^{W_{2}}\left(  M\right)
\end{array}
\]
commutative.
\end{corollary}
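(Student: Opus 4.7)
The plan is to reduce the claim to Corollary \ref{t3.1.2} by transferring its argument to the slice category $\mathcal{K}_{\mathbf{U}}/L$, exploiting local cartesian closedness of $\mathbf{U}$ and the universal property of the equalizer that defines $\left(\mathbf{T}_{\mathbf{U}}\right)_{L}^{W}$. First I would unfold $\left(M^{N}\right)_{L}$ according to the construction in the proof of Theorem \ref{t4.1}, and combine it with the identification supplied by Corollary \ref{t4.1.1}, so that both sides of the claimed equation are realized canonically as subobjects of the slice exponential $\left(\mathbf{T}_{\mathbf{U}}^{W_{2}}(M)^{\mathbf{T}_{\mathbf{U}}^{W_{1}}(N)}\right)_{L}$.

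Next I would observe that the canonical injection $\left(\mathbf{T}_{\mathbf{U}}\right)_{L}^{W}(X)\hookrightarrow\mathbf{T}_{\mathbf{U}}^{W}(X)$ is a monomorphism, being the apex of an equalizer, and that $\left(\alpha^{\mathbf{U}}\right)_{\varphi}^{L}(X)$ is uniquely determined by the commutativity of the final square in the statement, i.e.\ as the factorization of $\alpha_{\varphi}^{\mathbf{U}}(X)$ through these injections. Consequently, to verify an equality of two parallel morphisms whose codomains live in the slice, it suffices to verify it after post-composition with the canonical inclusion into $\mathbf{T}_{\mathbf{U}}^{W_{2}}(M)^{\mathbf{T}_{\mathbf{U}}^{W_{1}}(N)}$; this reduces the present corollary to its non-sliced counterpart, Corollary \ref{t3.1.2}, modulo checking that the composition respects the projections to $L$, which follows from the naturality of $\alpha_{\varphi}^{\mathbf{U}}$ with respect to $\pi_{1}$ and $\pi_{2}$.

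The principal obstacle, I expect, will be bookkeeping rather than mathematical substance: one must carefully verify that forming the exponential $(-)^{(-)}$ inside the slice $\mathcal{K}_{\mathbf{U}}/L$ is compatible with the equalizer presentation of $\left(\mathbf{T}_{\mathbf{U}}\right)_{L}^{W}$, so that the two parallel morphisms in the claim unfold to the same diagram upon projection back to $\mathcal{K}_{\mathbf{U}}$. Once this compatibility is pinned down, monicity of the canonical inclusions forces the equality in the slice, completing the reduction to Corollary \ref{t3.1.2}.
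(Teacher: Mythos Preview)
The paper gives no explicit proof of Corollary~\ref{t4.1.2}; it is recorded as an immediate by-product of the explicit construction of $(M^{N})_{L}$ in the proof of Theorem~\ref{t4.1}, which is word-for-word the construction of $M^{N}$ in Theorem~\ref{t3.1} carried out inside each slice $\mathbf{U}/L(W)$. Your opening sentence --- reduce to Corollary~\ref{t3.1.2} by transferring its argument to the slice --- is therefore exactly the paper's (implicit) approach, and nothing more is needed.

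Where your proposal goes astray is in the second paragraph, when you try to make that transfer concrete by ``post-composition with the canonical inclusion into $\mathbf{T}_{\mathbf{U}}^{W_{2}}(M)^{\mathbf{T}_{\mathbf{U}}^{W_{1}}(N)}$''. There is no such canonical monomorphism: the slice exponential $\bigl(A^{B}\bigr)_{L}$ is a fibrewise object and does \emph{not} embed into the global exponential $A^{B}$ in a locally cartesian closed category. The equalizer inclusion you correctly cite, $\left(\mathbf{T}_{\mathbf{U}}\right)_{L}^{W}(X)\hookrightarrow\mathbf{T}_{\mathbf{U}}^{W}(X)$, lives at the level of the \emph{arguments} $M$, $N$, $(M^{N})_{L}$; it does not lift to a comparison of the slice exponential $\bigl((\mathbf{T}_{\mathbf{U}})_{L}^{W_{2}}(M)^{(\mathbf{T}_{\mathbf{U}})_{L}^{W_{1}}(N)}\bigr)_{L}$ with the global one. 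So monicity cannot be used to reduce the sliced statement to the unsliced Corollary~\ref{t3.1.2} in the way you describe.

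The fix is simply to drop the attempted reduction and do what the paper does implicitly: repeat verbatim whatever argument one has in mind for Corollary~\ref{t3.1.2}, replacing every exponential, product, and equalizer by its counterpart in $\mathbf{U}/L(W)$, as licensed by the construction in Theorem~\ref{t4.1}. No passage back to the unsliced category is required or available.
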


\section{\label{s5}The Axiomatics}

\begin{definition}
A \underline{DG-category} (DG stands for Differential Geometry) is a quadruple
$\left(  \mathcal{K},\mathbf{T},\alpha,\mathbb{R}\right)  $, where

\begin{enumerate}
\item $\mathcal{K}$ is a category which is complete and cartesian closed.

\item Given an object $W$ in the category $\mathbf{Weil}_{k}$, $\mathbf{T}%
^{W}:\mathcal{K}\rightarrow\mathcal{K}$ is a functor subject to the conditions:

\begin{itemize}
\item $\mathbf{T}^{W}$ preserves limits.

\item $\mathbf{T}^{k}:\mathcal{K}\rightarrow\mathcal{K}$ is the identity functor.

\item We have
\[
\mathbf{T}^{W_{2}}\circ\mathbf{T}^{W_{1}}=\mathbf{T}^{W_{1}\otimes_{k}W_{2}}%
\]
for any objects $W_{1}$ and $W_{2}$ in the category $\mathbf{Weil}_{k}$.

\item We have
\[
\mathbf{T}^{W}\left(  M^{N}\right)  =\mathbf{T}^{W}\left(  M\right)
^{\mathbf{T}^{W}\left(  N\right)  }%
\]
for any objects $M$\ and $N$\ in the category $\mathcal{K}$.
\end{itemize}

\item Given a morphism $\varphi:W_{1}\rightarrow W_{2}$ in the category
$\mathbf{Weil}_{k}$, $\alpha_{\varphi}:\mathbf{T}^{W_{1}}\Rightarrow
\mathbf{T}^{W_{2}}$ is a natural transformation subject to the conditions:

\begin{itemize}
\item We have
\[
\alpha_{\mathrm{id}_{W}}=\mathrm{id}_{\mathbf{T}^{W}}%
\]
for any identity morphism $\mathrm{id}_{W}:W\rightarrow W$ in the category
$\mathbf{Weil}_{k}$.

\item We have
\[
\alpha_{\psi}\cdot\alpha_{\varphi}=\alpha_{\psi\circ\varphi}%
\]
for any morphisms $\varphi:W_{1}\rightarrow W_{2}$ and $\psi:W_{2}\rightarrow
W_{3}$ in the category $\mathbf{Weil}_{k}$.

\item Given objects $M$\ and $N$\ in the category $\mathcal{K}$, the morphism
\[
\alpha_{\varphi}\left(  M\right)  ^{\mathbf{T}^{W_{1}}\left(  N\right)
}:\mathbf{T}^{W_{1}}\left(  M^{N}\right)  =\mathbf{T}^{W_{1}}\left(  M\right)
^{\mathbf{T}^{W_{1}}\left(  N\right)  }\rightarrow\mathbf{T}^{W_{2}}\left(
M\right)  ^{\mathbf{T}^{W_{1}}\left(  N\right)  }%
\]
is equal to the morphism
\begin{align*}
\mathbf{T}^{W_{1}}\left(  M\right)  ^{\mathbf{T}^{W_{1}}\left(  N\right)  }  &
=\mathbf{T}^{W_{1}}\left(  M^{N}\right)  \,\underrightarrow{\alpha_{\varphi
}\left(  M^{N}\right)  }\,\mathbf{T}^{W_{2}}\left(  M^{N}\right)
=\mathbf{T}^{W_{2}}\left(  M\right)  ^{\mathbf{T}^{W_{2}}\left(  N\right)  }\\
& \underrightarrow{\mathbf{T}^{W_{2}}\left(  M\right)  ^{\alpha_{\varphi
}\left(  N\right)  }}\,\mathbf{T}^{W_{2}}\left(  M\right)  ^{\mathbf{T}%
^{W_{1}}\left(  N\right)  }%
\end{align*}
within the category $\mathcal{K}$.
\end{itemize}

\item Given an object $W$ in the category $\mathbf{Weil}_{k}$, we have
\[
\mathbf{T}^{W}\left(  \mathbb{R}\right)  =\mathbb{R}\underline{\otimes}_{k}W
\]

\item Given a morphism $\varphi:W_{1}\rightarrow W_{2}$ in the category
$\mathbf{Weil}_{k}$, we have
\[
\alpha_{\varphi}\left(  \mathbb{R}\right)  =\mathbb{R}\underline{\otimes}%
_{k}\varphi
\]

\end{enumerate}
\end{definition}

\begin{notation}
Given an object $W$ in the category $\mathbf{Weil}_{k}$, an object $L$\ in the
category $\mathcal{K}$, and an object $\pi:M\rightarrow L$\ in the slice
category $\mathcal{K}/L$, we denote by $\mathbf{T}_{L}^{W}\left(  M\right)  $
the equalizer of
\[
\mathbf{T}^{W}\left(  M\right)
\begin{array}
[c]{c}%
\underrightarrow{\qquad\qquad\qquad\qquad\mathbf{T}^{W}\left(  \pi\right)
\qquad\qquad\qquad\qquad}\\
\overrightarrow{\mathbf{T}^{W}\left(  \pi\right)  }\,\mathbf{T}^{W}\left(
L\right)  \,\overrightarrow{\alpha_{W\rightarrow k}\left(  L\right)
}\,\mathbf{T}^{k}\left(  L\right)  \,\overrightarrow{\alpha_{k\rightarrow
W}\left(  L\right)  }%
\end{array}
\mathbf{T}^{W}\left(  L\right)
\]
with $W\rightarrow k$\ and $k\rightarrow W$\ being the canonical morphisms
within the category $\mathbf{Weil}_{k}$. We can naturally extend
$\mathbf{T}_{L}^{W}$\ to a functor
\[
\mathcal{K}/L\rightarrow\mathcal{K}/L
\]
in the sense that, given any commutative diagram
\[%
\begin{array}
[c]{ccc}%
M & \underrightarrow{\,\,f\,\,} & N\\
\pi_{1}\searrow &  & \swarrow\pi_{2}\\
& L &
\end{array}
\]
within the category $\mathcal{K}$, there exists a unique morphism
\[
\mathbf{T}_{L}^{W}\left(  f\right)  :\mathbf{T}_{L}^{W}\left(  M\right)
\rightarrow\mathbf{T}_{L}^{W}\left(  N\right)
\]
making the diagram
\[%
\begin{array}
[c]{ccc}%
\mathbf{T}_{L}^{W}\left(  M\right)  & \underrightarrow{\mathbf{T}_{L}%
^{W}\left(  f\right)  } & \mathbf{T}_{L}^{W}\left(  N\right) \\
\downarrow &  & \downarrow\\
\mathbf{T}^{W}\left(  M\right)  & \overrightarrow{\mathbf{T}^{W}\left(
f\right)  } & \mathbf{T}^{W}\left(  N\right)
\end{array}
\]
commutative, where the two vertical arrows are the canonical injections.
\end{notation}

\begin{notation}
Given a morphism $\varphi:W_{1}\rightarrow W_{2}$ in the category
$\mathbf{Weil}_{k}$, an object $L$\ in the category $\mathcal{K}$, and an
object $\pi:M\rightarrow L$\ within the slice category $\mathcal{K}/L$, we
denote by $\alpha_{\varphi}^{L}$ the natural transformation
\[
\mathbf{T}_{L}^{W_{1}}\Rightarrow\mathbf{T}_{L}^{W_{2}}%
\]
making the diagram
\[%
\begin{array}
[c]{ccc}%
\mathbf{T}_{L}^{W_{1}}\left(  M\right)  & \underrightarrow{\alpha_{\varphi
}^{L}\left(  M\right)  } & \mathbf{T}_{L}^{W_{2}}\left(  M\right) \\
\downarrow &  & \downarrow\\
\mathbf{T}^{W_{1}}\left(  M\right)  & \overrightarrow{\alpha_{\varphi}\left(
M\right)  } & \mathbf{T}^{W_{2}}\left(  M\right)
\end{array}
\]
commutative for any object $W$ in the category $\mathbf{Weil}_{k}$, where $%
\begin{array}
[c]{c}%
\mathbf{T}_{L}^{W_{1}}\left(  M\right) \\
\downarrow\\
\mathbf{T}^{W_{1}}\left(  M\right)
\end{array}
$\ and $%
\begin{array}
[c]{c}%
\mathbf{T}_{L}^{W_{2}}\left(  M\right) \\
\downarrow\\
\mathbf{T}^{W_{2}}\left(  M\right)
\end{array}
$\ are the canonical injections.
\end{notation}

\begin{definition}
A \underline{local DG-category} is a DG-category $\left(  \mathcal{K}%
,\mathbf{T},\alpha,\mathbb{R}\right)  $ subject to the conditions:

\begin{enumerate}
\item The category $\mathcal{K}$\ is not only cartesian closed but, what is
even more, locally cartesian closed.

\item Given an object $W$ in the category $\mathbf{Weil}_{k}$, an object
$L$\ in the category $\mathcal{K}$, and objects $\pi_{1}:M\rightarrow L$\ and
$\pi_{2}:N\rightarrow L$\ in the slice category $\mathcal{K}/L$, we have
\[
\mathbf{T}_{L}^{W}\left(  \left(  M^{N}\right)  _{L}\right)  =\left(
\mathbf{T}_{L}^{W}\left(  M\right)  ^{\mathbf{T}_{L}^{W}\left(  N\right)
}\right)  _{L}%
\]
within the category $\mathcal{K}/L$.

\item Given an object $W$ in the category $\mathbf{Weil}_{k}$, an object
$L$\ in the category $\mathcal{K}$, and objects $\pi_{1}:M\rightarrow L$\ and
$\pi_{2}:N\rightarrow L$\ in the slice category $\mathcal{K}/L$, the morphism
\[
\alpha_{\varphi}^{L}\left(  M\right)  ^{\mathbf{T}_{L}^{W_{1}}\left(
N\right)  }:\mathbf{T}_{L}^{W_{1}}\left(  \left(  M^{N}\right)  _{L}\right)
=\left(  \mathbf{T}_{L}^{W_{1}}\left(  M\right)  ^{\mathbf{T}_{L}^{W_{1}%
}\left(  N\right)  }\right)  _{L}\rightarrow\left(  \mathbf{T}_{L}^{W_{2}%
}\left(  M\right)  ^{\mathbf{T}_{L}^{W_{1}}\left(  N\right)  }\right)  _{L}%
\]
is equal to the morphism
\begin{align*}
\left(  \mathbf{T}_{L}^{W_{1}}\left(  M\right)  ^{\mathbf{T}_{L}^{W_{1}%
}\left(  N\right)  }\right)  _{L}  & =\mathbf{T}_{L}^{W_{1}}\left(  \left(
M^{N}\right)  _{L}\right)  \,\underrightarrow{\alpha_{\varphi}^{L}\left(
\left(  M^{N}\right)  _{L}\right)  }\,\\
\mathbf{T}_{L}^{W_{2}}\left(  \left(  M^{N}\right)  _{L}\right)   & =\left(
\mathbf{T}_{L}^{W_{2}}\left(  M\right)  ^{\mathbf{T}_{L}^{W_{2}}\left(
N\right)  }\right)  _{L}\,\underrightarrow{\mathbf{T}_{L}^{W_{2}}\left(
M\right)  ^{\alpha_{\varphi}^{L}\left(  N\right)  }}\,\left(  \mathbf{T}%
_{L}^{W_{2}}\left(  M\right)  ^{\mathbf{T}_{L}^{W_{1}}\left(  N\right)
}\right)  _{L}%
\end{align*}
within the category $\mathcal{K}/L$.
\end{enumerate}
\end{definition}

\begin{proposition}
Given a local DG-category $\left(  \mathcal{K},\mathbf{T},\alpha
,\mathbb{R}\right)  $ and an object $L$\ in the category $\mathcal{K}$, the
quadruple
\[
\left(  \mathcal{K}/L,\mathbf{T}_{L},\alpha^{L},
\begin{array}
[c]{c}%
L\times\mathbb{R}\\
\downarrow\\
L
\end{array}
\right)  \text{,}%
\]
which may be considerd to be the \underline{localization of the DG-category}
$\left(  \mathcal{K},\mathbf{T},\alpha,\mathbb{R}\right)  $ \underline{with
respect to} $L$ in a sense, is a local DG-category, where $%
\begin{array}
[c]{c}%
L\times\mathbb{R}\\
\downarrow\\
L
\end{array}
$\ is the canonical projection.
\end{proposition}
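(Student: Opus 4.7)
The plan is to verify each clause in the definitions of DG-category and local DG-category for the quadruple $(\mathcal{K}/L,\mathbf{T}_{L},\alpha^{L}, L\times\mathbb{R}\to L)$, reducing every item to the corresponding property of the ambient data $(\mathcal{K},\mathbf{T},\alpha,\mathbb{R})$. The structural prerequisites are immediate: $\mathcal{K}/L$ is complete since $\mathcal{K}$ is, and it is cartesian closed and even locally cartesian closed because local cartesian closedness of $\mathcal{K}$ is by definition stable under slicing. This disposes of axiom (1) of the local DG-category together with the completeness and cartesian closedness half of the DG-category definition.

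Next I would verify the four bulleted conditions on each $\mathbf{T}_{L}^{W}$. Limit preservation follows because $\mathbf{T}_{L}^{W}(M)$ is defined as an equalizer in $\mathcal{K}/L$ of a diagram built from $\mathbf{T}^{W}$, $\alpha_{W\to k}$, and $\alpha_{k\to W}$, and limits in $\mathcal{K}/L$ are created by the forgetful functor while limits commute with equalizers. The identity property $\mathbf{T}_{L}^{k}=\mathrm{id}_{\mathcal{K}/L}$ falls out because $\mathbf{T}^{k}=\mathrm{id}$ collapses the parallel pair to the identity on $\pi\colon M\to L$. The composition law $\mathbf{T}_{L}^{W_{2}}\circ\mathbf{T}_{L}^{W_{1}}=\mathbf{T}_{L}^{W_{1}\otimes_{k}W_{2}}$ is a diagram chase from the ambient composition law and the universal property of the equalizer. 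The exponential law $\mathbf{T}_{L}^{W}((M^{N})_{L})=(\mathbf{T}_{L}^{W}(M)^{\mathbf{T}_{L}^{W}(N)})_{L}$ is exactly condition (2) of the local DG-category axioms, inherited from $\mathcal{K}$. The identity, composition, and exponential-compatibility axioms for $\alpha^{L}$ transport from those for $\alpha$ by the defining commutative square in the Notation preceding the proposition, with uniqueness supplied by the equalizer universal property. Finally, since $\mathbf{T}^{W}$ preserves products, $\mathbf{T}^{W}(L\times\mathbb{R})=\mathbf{T}^{W}(L)\times(\mathbb{R}\underline{\otimes}_{k}W)$; taking the equalizer that cuts out the fiber over $L$ gives $\mathbf{T}_{L}^{W}(L\times\mathbb{R}\to L)=(L\times(\mathbb{R}\underline{\otimes}_{k}W)\to L)$, and the $\alpha^{L}$-clause on $\mathbb{R}$ is analogous.

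The main obstacle will be verifying the two remaining local axioms for the new slice datum: given a further object $q\colon L_{0}\to L$ of $\mathcal{K}/L$, one must check the exponential preservation $(\mathbf{T}_{L})_{q}^{W}((M^{N})_{q})=((\mathbf{T}_{L})_{q}^{W}(M)^{(\mathbf{T}_{L})_{q}^{W}(N)})_{q}$ and the corresponding hexagon for $(\alpha^{L})_{\varphi}^{q}$. My strategy is to exploit the canonical equivalence $(\mathcal{K}/L)/q\simeq\mathcal{K}/L_{0}$ and show that under it $(\mathbf{T}_{L})_{q}^{W}$ corresponds to $\mathbf{T}_{L_{0}}^{W}$ and $(\alpha^{L})_{\varphi}^{q}$ corresponds to $\alpha_{\varphi}^{L_{0}}$, so that both local conditions collapse onto the ambient local conditions of $\mathcal{K}$ applied over $L_{0}$. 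Concretely this requires showing that the outer equalizer over $L$ (defining $\mathbf{T}_{L}^{W}$) is already subsumed by the inner equalizer over $L_{0}$ (defining $\mathbf{T}_{L_{0}}^{W}$), because the structural morphism $L_{0}\to L$ lives in $\mathcal{K}/L$ and hence the relevant squares commute. Once this identification is in place, the local axioms for the new quadruple reduce mechanically to those of the original local DG-category, completing the proof.
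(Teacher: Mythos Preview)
Your proposal is correct and follows essentially the same route as the paper: the decisive step in both is the identification $(\mathcal{K}/L)/q\simeq\mathcal{K}/L_{0}$ together with $(\mathbf{T}_{L})_{q}^{W}=\mathbf{T}_{L_{0}}^{W}$ and $(\alpha^{L})^{q}=\alpha^{L_{0}}$, which reduces the local axioms for the sliced datum to those of the ambient one over $L_{0}$. The paper treats the plain DG-category axioms for $(\mathcal{K}/L,\mathbf{T}_{L},\alpha^{L},L\times\mathbb{R}\to L)$ as routine and omits them, whereas you spell them out, but the substance is the same.
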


\begin{proof}
Given an object $%
\begin{array}
[c]{c}%
M\\
\downarrow\\
L
\end{array}
$\ in the slice category $\mathcal{K}/L$, we note the following:

\begin{enumerate}
\item We can naturally identify the slice category
\[
\left(  \mathcal{K}/L\right)  /\left(
\begin{array}
[c]{c}%
M\\
\downarrow\\
L
\end{array}
\right)
\]
with the slice category
\[
\mathcal{K}/M
\]
for which the reader is referred, say, to Page 8 of \cite{john}.

\item We have
\[
\left(  \mathbf{T}_{L}\right)  _{M\rightarrow L}=\mathbf{T}_{M}\text{,}%
\]
since the diagram
\[%
\begin{array}
[c]{ccccc}%
\mathbf{T}^{W}\left(  M\right)  & \underrightarrow{\alpha_{W\rightarrow
k}\left(  M\right)  } & \mathbf{T}^{k}\left(  M\right)  & \underrightarrow
{\alpha_{k\rightarrow W}\left(  M\right)  } & \mathbf{T}^{W}\left(  M\right)
\\
\downarrow &  & \downarrow &  & \downarrow\\
\mathbf{T}^{W}\left(  L\right)  & \overrightarrow{\alpha_{W\rightarrow
k}\left(  L\right)  } & \mathbf{T}^{k}\left(  L\right)  & \overrightarrow
{\alpha_{k\rightarrow W}\left(  L\right)  } & \mathbf{T}^{W}\left(  L\right)
\end{array}
\]
is commutative.

\item It is easy to see that
\[
\left(  \alpha^{L}\right)  ^{M\rightarrow L}=\alpha^{M}%
\]

\item We have
\begin{align*}
& M\times_{L}\left(  L\times\mathbb{R}\right) \\
& =M\times\mathbb{R}%
\end{align*}

\end{enumerate}

Therefore the localization
\[
\left(  \left(  \mathcal{K}/L\right)  /\left(
\begin{array}
[c]{c}%
M\\
\downarrow\\
L
\end{array}
\right)  ,\left(  \mathbf{T}_{L}\right)  _{M\rightarrow L},\left(  \alpha
^{L}\right)  ^{M\rightarrow L},
\begin{array}
[c]{ccc}%
M\times_{L}\left(  L\times\mathbb{R}\right)  & \rightarrow & L\times
\mathbb{R}\\
\downarrow &  & \downarrow\\
M & \rightarrow & L
\end{array}
\right)
\]
of the DG-category
\[
\left(  \mathcal{K}/L,\mathbf{T}_{L},\alpha^{L},
\begin{array}
[c]{c}%
L\times\mathbb{R}\\
\downarrow\\
L
\end{array}
\right)
\]
with respect to
\[%
\begin{array}
[c]{c}%
M\\
\downarrow\\
L
\end{array}
\]
is no other than the localization
\[
\left(  \mathcal{K}/M,\mathbf{T}_{M},\alpha^{M},
\begin{array}
[c]{c}%
M\times\mathbb{R}\\
\downarrow\\
M
\end{array}
\right)
\]
of the DG-category
\[
\left(  \mathcal{K},\mathbf{T},\alpha,\mathbb{R}\right)
\]
with respect to $M$,\ so that the desired conclusion follows readily.
\end{proof}

\end{document}